\def\@settitle{\begin{center}\baselineskip14\p@\relax\sc\Large\@title\end{center}}
\def\@setauthors{\begingroup\def\thanks{\protect\thanks@warning}\trivlist\centering\footnotesize \@topsep30\p@\relax\advance\@topsep by -\baselineskip\item\relax\author@andify\authors\def\\{\protect\linebreak}\sc\large{\authors}\ifx\@empty\contribs\else,\penalty-3 \space \@setcontribs\@closetoccontribs\fi\endtrivlist\endgroup}
\renewcommand*{\@seccntformat}[1]{\csname the#1\endcsname.\hspace{1mm}} 
\title[Embeddability of path geometries]{Local Embeddability of Real Analytic Path Geometries}
\author[T. Mettler]{Thomas Mettler}
\address{Mathematical Sciences Research Institute, 17 Gauss Way, Berkeley, CA-94720, USA}
\email{tmettler@msri.org}
\date{March 16, 2012}
\thanks{Research for this article was carried out while the author was supported by Schweizerischer Nationalfonds SNF via the postdoctoral fellowship PBFRP2-133545 and by the Mathematical Sciences Research Institute, Berkeley.}
\keywords{Path geometries, CR-structures, embedding problems, exterior differential systems.}
\subjclass[2010]{58A15, 32V30, 53C10}
\renewcommand{\rm}{\mathrm}
\renewcommand{\Re}{\operatorname{Re}}
\renewcommand{\Im}{\operatorname{Im}}
\newcommand{\End}{\operatorname{End}}
\newcommand{\J}{J}
\newcommand{\AJ}{\mathfrak{J}}
\newcommand{\M}{M}
\newcommand{\N}{N}
\newcommand{\B}{\Lambda_\AJ}
\newcommand{\ba}{b}
\renewcommand{\O}{H}
\newcommand{\R}{\mathbb{R}}
\newcommand{\bT}{T}
\newcommand{\C}{\mathbb{C}}
\newcommand{\bv}{v}
\newcommand{\Ad}{\operatorname{\textrm{Ad}}}
\newcommand{\bX}{X}
\renewcommand{\d}{d}
\renewcommand{\d}{\mathrm{d}}
\renewcommand{\i}{\mathrm{i}}
\newtheorem{theorem}{Theorem}
\newtheorem{corollary}{Corollary}
\newtheorem{lemma}{Lemma}
\newtheorem{proposition}{Proposition}
\theoremstyle{definition}
\newtheorem*{defi}{Definition}
\newtheorem*{remark}{Remark}
\numberwithin{equation}{section}
\begin{document}

\begin{abstract}
An almost complex structure $\AJ$ on a $4$-manifold $X$ may be described in terms of a rank $2$ vector bundle $\Lambda_\AJ \subset \Lambda^2TX^*$. We call a pair of line subbundles $L_1,L_2$ of $\Lambda^2TX^*$ a splitting of $\AJ$ if $\Lambda_{\AJ}=L_1\oplus L_2$. A hypersurface $M \subset X$ satisfying a nondegeneracy condition inherits a CR-structure from $\AJ$ and a path geometry from the splitting $(L_1,L_2)$. Using the Cartan-K\"ahler theorem we show that locally every real analytic path geometry is induced by an embedding into $\C^2$ equipped with the splitting generated by the real and imaginary part of $\d z^1\wedge \d z^2$. As a corollary we obtain the well-known fact that every $3$-dimensional nondegenerate real analytic CR-structure is locally induced by an embedding into $\C^2$.  
\end{abstract}

\maketitle

\section{Introduction}
Motivated by the well-known fact (see for instance~\cite{MR2313334}) that an almost complex structure $\AJ$ on a $4$-manifold $X$ admits a description in terms of a rank $2$ vector bundle $\Lambda_\AJ \subset \Lambda^2TX^*$, we introduce the notion of a splitting of an almost complex structure:  A pair of line subbundles $L_1,L_2$ of $\Lambda^2TX^*$ is called a \textit{splitting} of $\AJ$ if $\Lambda_{\AJ}=L_1\oplus L_2$. A hypersurface $M \subset X$ satisfying a nondegeneracy condition inherits a CR-structure from $\AJ$ and a path geometry from the splitting $(L_1,L_2)$. The purpose of this Note is to show that locally every real analytic path geometry is induced by an embedding into $\R^4\simeq\C^2$ equipped with the splitting generated by the real and imaginary part of $\d z^1\wedge \d z^2$. This will be done using the Cartan-K\"ahler theorem. As a corollary we obtain the well-known fact that every $3$-dimensional nondegenerate real analytic CR-structure is locally induced by an embedding into $\C^2$. It follows with Nirenberg's example of a smooth non-embeddable $3$-dimensional CR-manifold that the real analyticity in our main statement is necessary.  

The notation and terminology for the Cartan-K\"ahler theorem and exterior differential systems are chosen to be consistent with~\cite{MR1083148,MR2003610}. Moreover we adhere to the convention of summing over repeated indices. 

\section{Preliminaries}

\subsection{Pairs of 2-forms}

Throughout this section, let $V$ denote an oriented $4$-dimensional real vector space. Fix a volume form $\varepsilon \in \Lambda^{4}V^*$ which induces the given orientation. Given two $2$-forms $\omega,\phi \in \Lambda^{2}V^*$, we may write
$
\omega\wedge\phi=\langle \omega,\phi \rangle \varepsilon
$
for some unique real number $\langle \omega,\phi \rangle$. Clearly the map $(\omega,\phi) \mapsto \langle \omega,\phi\rangle$ defines a symmetric bilinear form on the $6$-dimensional real vector space $\Lambda^{2}V^*$ which is easily seen to be nondegenerate and of signature $(3,3)$. Replacing $\varepsilon$ with another orientation compatible volume form gives a bilinear form which is a positive multiple of $\langle\cdot\,,\cdot\rangle$. Consequently, the wedge product may be thought of as a conformal structure of split signature on $\Lambda^2V^*$. 
\begin{defi}
A pair of $2$-forms $\omega,\phi \in \Lambda^2V^*$ is called \textit{elliptic} if 
$$
\langle \omega,\omega \rangle\langle \phi,\phi \rangle>\langle \omega,\phi \rangle^2.
$$
\end{defi}
It is a natural problem to classify the pairs of elliptic $2$-forms on $V$. This is a special case of a more general problem: Let $\omega \in \Lambda^2V^*$ be a symplectic $2$-form whose stabiliser subgroup will be denoted by $\rm{Sp}(\omega) \subset \rm{GL}(V)$. The natural representation of $\rm{Sp}(\omega)$ on $\Lambda^2V^*$ decomposes as
$
\Lambda^2V^*=\left\{\omega\right\} \oplus \omega^{\perp}
$
where both summands are irreducible $\rm{Sp}(\omega)$-modules.\footnote{We denote by $\left\{\cdot \right\}$ the linear span of the elements within.  In the case of smooth differential forms, the coefficients are smooth real-valued functions.} Here $\omega^{\perp}$ is the $5$-dimensional linear subspace of $\Lambda^2V^*$ consisting of $2$-forms orthogonal to $\omega$. One can ask to classify the orbits of $\rm{Sp}(\omega)$ on $\omega^{\perp}$. This has been carried out in~\cite{MR703049}. In the elliptic case one obtains:

\begin{lemma}[see~\cite{MR703049}]\label{alg}
Let $\omega,\phi \in \Lambda^2V^*$ be a pair of elliptic orthogonal $2$-forms, then there exists a positive real number $\kappa$ and a basis $e^i$ of $V^*$ such that 
$$
\omega=e^1\wedge e^3-e^2 \wedge e^4, \quad \phi=\kappa\left(e^1\wedge e^4+e^2\wedge e^3\right). 
$$
\end{lemma}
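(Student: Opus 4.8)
The plan is to complexify the pair $(\omega,\phi)$ and to extract the normal form from a rank-one factorisation of the complex $2$-form $\omega+\i\phi$. First I would record the elementary consequences of the hypotheses. Since $\langle\omega,\omega\rangle\langle\phi,\phi\rangle>\langle\omega,\phi\rangle^2=0$, the numbers $\langle\omega,\omega\rangle$ and $\langle\phi,\phi\rangle$ are both non-zero and of the same sign; in particular $\omega\wedge\omega\neq0$ and $\phi\wedge\phi\neq0$, so $\omega$ and $\phi$ are non-degenerate. Put $\kappa:=\sqrt{\langle\phi,\phi\rangle/\langle\omega,\omega\rangle}>0$. Replacing $\phi$ by $\kappa^{-1}\phi$ (which leaves $\langle\omega,\phi\rangle=0$ intact) I may assume henceforth that $\langle\phi,\phi\rangle=\langle\omega,\omega\rangle$; it then suffices to produce a basis $e^i$ of $V^*$ with $\omega=e^1\wedge e^3-e^2\wedge e^4$ and $\phi=e^1\wedge e^4+e^2\wedge e^3$, since multiplying the second identity by $\kappa$ gives the assertion for the original $\phi$.

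Next I would set $\Omega:=\omega+\i\,\phi\in\Lambda^2V^*\otimes\C$ and compute $\Omega\wedge\Omega$ and $\Omega\wedge\overline{\Omega}$. Using $\phi\wedge\omega=\omega\wedge\phi$ together with $\langle\omega,\phi\rangle=0$ and $\langle\phi,\phi\rangle=\langle\omega,\omega\rangle$,
\[
\Omega\wedge\Omega=\omega\wedge\omega-\phi\wedge\phi+2\i\,\omega\wedge\phi=(\langle\omega,\omega\rangle-\langle\phi,\phi\rangle)\,\varepsilon=0,\qquad
\Omega\wedge\overline{\Omega}=\omega\wedge\omega+\phi\wedge\phi=2\langle\omega,\omega\rangle\,\varepsilon\neq0.
\]
As $\Omega\neq0$ (otherwise $\omega=0$) and $\Omega\wedge\Omega=0$, a non-zero alternating $2$-form with vanishing square has rank $2$, so $\Omega$ is decomposable: $\Omega=\zeta^1\wedge\zeta^2$ for some $\zeta^1,\zeta^2\in V^*\otimes\C$. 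Write $\zeta^1=e^1+\i e^2$ and $\zeta^2=e^3+\i e^4$ with $e^j\in V^*$. Expanding $\Omega=\zeta^1\wedge\zeta^2=(e^1\wedge e^3-e^2\wedge e^4)+\i(e^1\wedge e^4+e^2\wedge e^3)$ and comparing real and imaginary parts with $\Omega=\omega+\i\phi$ yields exactly $\omega=e^1\wedge e^3-e^2\wedge e^4$ and $\phi=e^1\wedge e^4+e^2\wedge e^3$.

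It remains to see that $e^1,e^2,e^3,e^4$ are linearly independent, and this is where the second computation is needed: expanding $\Omega\wedge\overline{\Omega}=\zeta^1\wedge\zeta^2\wedge\overline{\zeta^1}\wedge\overline{\zeta^2}$ in the $e^j$ produces $4\,e^1\wedge e^2\wedge e^3\wedge e^4$, which is non-zero by the display above; hence $e^1\wedge e^2\wedge e^3\wedge e^4\neq0$ and the $e^j$ form a basis of the four-dimensional space $V^*$. This completes the argument.

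The computations here are all routine; the one genuine idea is the passage to $\Omega=\omega+\i\phi$, where orthogonality kills the cross term $\omega\wedge\phi$ and ellipticity is precisely what permits the normalisation making $\Omega\wedge\Omega$ vanish, so that $\Omega$ factors. The point that needs the most care is the final independence check, into which ellipticity enters a second time (through $\langle\omega,\omega\rangle\neq0$) to guarantee $\Omega\wedge\overline{\Omega}\neq0$. A less slick but more elementary alternative avoids complexification: one introduces $A\in\End(V)$ defined by $\phi(u,v)=\omega(Au,v)$, notes that $A$ is self-adjoint for $\omega$ and that orthogonality forces $\operatorname{tr}A=0$, and then shows --- via a short case analysis on the eigenvalues of $A$, where ellipticity is used to exclude real eigenvalues --- that $A^2=-\kappa^2\Id$; then $\kappa^{-1}A$ is a complex structure and an adapted basis yields the normal form. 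The complexification merely packages this case analysis into the single identity $\Omega\wedge\Omega=0$.
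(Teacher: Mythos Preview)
The paper does not actually prove this lemma; it simply attributes it to Lychagin--Rubtsov \cite{MR703049} and moves on. So there is no ``paper's own proof'' to compare against, and your task reduces to giving a self-contained argument.

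Your complexification argument is correct and clean. The key steps all check out: orthogonality gives $\langle\omega,\phi\rangle=0$, so ellipticity forces $\langle\omega,\omega\rangle$ and $\langle\phi,\phi\rangle$ to be nonzero of the same sign; after the rescaling by $\kappa$ one has $\Omega\wedge\Omega=0$, and in a four-dimensional space (over $\C$ here) this is exactly the Pl\"ucker condition for decomposability of a nonzero $2$-form; the expansion $\zeta^1\wedge\zeta^2=(e^1\wedge e^3-e^2\wedge e^4)+\i(e^1\wedge e^4+e^2\wedge e^3)$ is correct; and $\Omega\wedge\overline{\Omega}=4\,e^1\wedge e^2\wedge e^3\wedge e^4=2\langle\omega,\omega\rangle\varepsilon\neq0$ gives the independence of the $e^j$. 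Nothing is missing.

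The alternative you sketch via the $\omega$-symmetric endomorphism $A$ with $\phi(u,v)=\omega(Au,v)$ is closer in spirit to the original Lychagin--Rubtsov classification (which proceeds by studying invariants of such an endomorphism to separate the elliptic, hyperbolic, and parabolic orbits). Your complex route is shorter for the elliptic case alone because the condition $\Omega\wedge\Omega=0$ bypasses the eigenvalue discussion entirely; the endomorphism approach, by contrast, handles all orbit types uniformly. Either is perfectly adequate here.
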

The constant $\kappa$ is an $\rm{Sp}(\omega)$-invariant and thus parametrises the set of elliptic $\rm{Sp}(\omega)$-orbits. Ellipticity will be useful because of the following: 
\begin{lemma}\label{huereseich}
Let $W$ be $3$-dimensional real vector space. Then the pullback of an elliptic pair of $2$-forms $\omega,\phi \in \Lambda^2V^*$ with any injective linear map $A : W \to V$ gives two linearly independent $2$-forms on $W$. 
\end{lemma}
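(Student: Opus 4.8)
The plan is to argue by contradiction and to exploit the fact, implicit in the discussion preceding the lemma, that ellipticity of $(\omega,\phi)$ forces \emph{every} nonzero $2$-form in the pencil $\{\omega,\phi\}\subset\Lambda^2V^*$ to be nondegenerate. So suppose $A^*\omega$ and $A^*\phi$ were linearly dependent in $\Lambda^2W^*$. Then there is $(s,t)\in\R^2\setminus\{0\}$ with $s\,A^*\omega+t\,A^*\phi=0$; writing $\psi:=s\omega+t\phi$ and using that $A$ is injective, this says precisely that $\psi$ restricts to $0$ on the $3$-dimensional subspace $U:=A(W)\subset V$.

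First I would check that $\psi$ is symplectic. Expanding,
\[
\langle\psi,\psi\rangle=s^2\langle\omega,\omega\rangle+2st\langle\omega,\phi\rangle+t^2\langle\phi,\phi\rangle,
\]
a binary quadratic form in $(s,t)$ whose discriminant equals $4\bigl(\langle\omega,\phi\rangle^2-\langle\omega,\omega\rangle\langle\phi,\phi\rangle\bigr)$, which is negative by the definition of ellipticity. Hence this quadratic form is definite, so $\langle\psi,\psi\rangle\neq 0$, i.e.\ $\psi\wedge\psi\neq 0$ and $\psi$ is a symplectic form on $V$. (Alternatively, one may first replace $\phi$ by $\phi-\tfrac{\langle\omega,\phi\rangle}{\langle\omega,\omega\rangle}\omega$, which spans the same pencil and is orthogonal to $\omega$, and then read the same conclusion off the normal form of Lemma~\ref{alg}.)

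Finally I would derive the contradiction from pure symplectic linear algebra: for a symplectic form $\psi$ on the $4$-dimensional space $V$, the $\psi$-orthogonal subspace $U^{\perp_\psi}=\{v\in V:\psi(v,u)=0\ \text{for all}\ u\in U\}$ has dimension $\dim V-\dim U=1$, and the radical of the restriction $\psi|_U$ is $U\cap U^{\perp_\psi}$, hence at most $1$-dimensional. Since $\dim U=3>1$, the form $\psi|_U$ cannot vanish, contradicting the previous paragraph. Therefore $A^*\omega$ and $A^*\phi$ are linearly independent.

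There is no serious obstacle here; the one point that needs care is the passage from the algebraic ellipticity inequality to the nondegeneracy of the entire pencil spanned by $\omega$ and $\phi$, which is exactly what the normal form in Lemma~\ref{alg} encodes. Everything else is the standard fact that a $3$-dimensional subspace of a symplectic $4$-space is never isotropic.
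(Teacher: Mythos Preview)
Your proof is correct and follows essentially the same route as the paper's: both observe that ellipticity makes every nonzero $2$-form in the pencil $\{\omega,\phi\}$ symplectic, and then invoke the fact that a symplectic $4$-space has no isotropic subspace of dimension greater than $2$. You simply spell out the discriminant computation and the dimension count for the radical that the paper leaves implicit.
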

\begin{proof}
The ellipticity condition is equivalent to every nonzero linear combination of $(\omega,\phi)$ being symplectic. Suppose $(\omega,\phi)$ is an elliptic pair of $2$-forms. Then for every choice of real numbers $(\lambda_1,\lambda_2)\neq 0$, the $2$-form $\tau=\lambda_1\omega+\lambda_2\phi$ is symplectic. Since there are no isotropic subspaces of dimension greater than $2$ in the symplectic vector space $(V,\tau)$, it follows that $A^*\tau=\lambda_1A^*\omega+\lambda_2A^*\phi\neq 0$ for every linear injective map $A : W \to V$.   
\end{proof}

\subsection{Splittings of complex structures} 

Let  $\mathcal{C}^+(V)$ denote space of complex structures on $V$ which are compatible with the orientation, i.e. its points $\J \in \End(V)$ satisfy $\varepsilon(v_1,\J v_1,v_2,\J v_2)\geq 0$ for all vectors $v_1,v_2 \in V$. Moreover let $G^+_2(\Lambda^2V^*,\wedge_+)$ denote the submanifold of the Grassmannian of oriented $2$-planes in $\Lambda^2V^*$ to whose elements the wedge product restricts to be positive definite. Given a $(2,\! 0)$-form $\alpha \in \Lambda^{2,0}V^*$ with respect to some $J \in \mathcal{C}^+(V)$, let $\Lambda_J \in G^+_2(\Lambda^2V^*,\wedge_+)$ denote the $2$-dimensional linear subspace spanned by $\Re(\alpha)$, $\Im(\alpha)$ and orient $\Lambda_J$ by declaring $\Re(\alpha),\Im(\alpha)$ to be positively oriented. Clearly $\Lambda_J$ and its orientation are independent of the chosen $(2,\! 0)$-form $\alpha$ and one thus obtains a map $\psi : \mathcal{C}^+(V) \to G^+_2(\Lambda^2V^*,\wedge_+)$ given by $J \mapsto \Lambda_{J}$. Note that $G=\rm{GL}^+(V)$ acts smoothly and transitively from the left on $\mathcal{C}^+(V)$ via $(A,J) \mapsto A^{-1}JA$. Every element of $G^+_2(\Lambda^2V^*,\wedge_+)$ admits a positively oriented elliptic conformal basis. It follows with Lemma \ref{alg} that via pushforward, $\rm{GL}^+(V)$ acts smoothly and transitively from the left on $G^+_2(\Lambda^2V^*,\wedge_+)$ as well.   
\begin{proposition}\label{equi}
The map $
\psi : \mathcal{C}^+(V) \to G^+_2(\Lambda^2V^*,\wedge_+)$, $J \mapsto \Lambda_J$ is a $G$-equivariant diffeomorphism. 
\end{proposition}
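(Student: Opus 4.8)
The plan is to verify that $\psi$ is smooth, $G$-equivariant and bijective, and then to invoke the standard fact that a smooth $G$-equivariant bijection between homogeneous $G$-spaces is automatically a $G$-equivariant diffeomorphism. Smoothness of $\psi$ is routine: near any $\J_0\in\mathcal{C}^+(V)$ one may choose a nowhere-vanishing $(2,\!0)$-form $\alpha_\J$ depending smoothly on $\J$ (for instance by projecting a fixed $2$-form onto $\Lambda^{2,0}V^*$ with respect to $\J$), so that $\J\mapsto(\Re\alpha_\J,\Im\alpha_\J)$ is smooth and hence so is $\psi$.

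For $G$-equivariance, recall that $\alpha\in\Lambda^{2,0}V^*$ with respect to $\J$ is characterised by $\alpha(\J v,w)=\i\,\alpha(v,w)$ for all $v,w\in V$. A one-line computation with this identity shows that for $A\in G=\rm{GL}^+(V)$ the image of $\alpha$ under the induced $G$-action on $\Lambda^2V^*$ is of type $(2,\!0)$ with respect to the correspondingly transformed complex structure; taking real and imaginary parts, $\psi$ intertwines the $G$-actions on $\mathcal{C}^+(V)$ and on $G^+_2(\Lambda^2V^*,\wedge_+)$ recalled above. The orientations match because, by Lemma~\ref{alg}, $\rm{GL}^+(V)$ carries a positively oriented elliptic conformal basis to another such one.

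Bijectivity is where the genuine content lies. Surjectivity follows at once from equivariance and the transitivity of the two $G$-actions: the image of $\psi$ is a nonempty $G$-invariant subset of $G^+_2(\Lambda^2V^*,\wedge_+)$ and hence all of it. For injectivity I would reconstruct $\J$ from the oriented plane $\Lambda:=\Lambda_\J$. Complexifying, $\Lambda\otimes\C$ splits as $\Lambda^{2,0}V^*\oplus\Lambda^{0,2}V^*$ with respect to $\J$, and the orientation of $\Lambda$ singles out the summand $\Lambda^{2,0}V^*$ as the complex line spanned by $\beta_1+\i\,\beta_2$, where $(\beta_1,\beta_2)$ is a positively oriented basis of $\Lambda$. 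A nonzero $\alpha$ in this line is decomposable, since $\Lambda^{2,0}V^*=\Lambda^2(V^{1,0})^*$ is one-dimensional over $\C$; consequently $\{u\in V\otimes\C:\iota_u\alpha=0\}$ equals $V^{0,1}$, a complex $2$-plane meeting $V$ in $\{0\}$. Hence, if $\Lambda_{\J_1}=\Lambda_{\J_2}$ as oriented planes, then a common nonzero $(2,\!0)$-form $\alpha$ exists and $\alpha((\J_1-\J_2)v,w)=\i\,\alpha(v,w)-\i\,\alpha(v,w)=0$ for all $v,w\in V$, so that $(\J_1-\J_2)v$ lies in $\{u:\iota_u\alpha=0\}\cap V=\{0\}$ for every $v\in V$, i.e.\ $\J_1=\J_2$.

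Finally, a smooth $G$-equivariant bijection $\psi$ between the homogeneous $G$-spaces $\mathcal{C}^+(V)\cong G/\rm{Stab}(\J_0)$ and $G^+_2(\Lambda^2V^*,\wedge_+)\cong G/\rm{Stab}(\Lambda_{\J_0})$ must be a diffeomorphism: equivariance gives $\rm{Stab}(\J_0)\subseteq\rm{Stab}(\Lambda_{\J_0})$, injectivity gives the reverse inclusion, and with equal stabilisers $\psi$ becomes, under these identifications, the identity map of coset spaces. I expect the main obstacle to be the injectivity argument, and within it the two points that the orientation of the $2$-plane genuinely distinguishes $\Lambda^{2,0}V^*$ from $\Lambda^{0,2}V^*$ and that the annihilator of a nonzero $(2,\!0)$-form meets $V$ only in $\{0\}$. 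An alternative to the abstract final step would be to compute $\d\psi$ at a single point and check that it is an isomorphism, which by equivariance suffices, but this is less transparent.
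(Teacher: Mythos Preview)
Your argument is correct in outline and takes a genuinely different route from the paper. The paper establishes the diffeomorphism by a direct matrix computation showing that the stabilisers $G_J$ and $G_{\psi(J)}$ coincide: writing $A^*\omega=x\omega+y\phi$ and $A^*\phi=-y\omega+x\phi$ for $A\in G_{\psi(J)}$ and a positively oriented conformal basis $(\omega,\phi)$ of $\Lambda_J$, one obtains matrix relations $a^twa=xw+yf$, $a^tfa=-yw+xf$ which force $awf=wfa$, i.e.\ $A$ commutes with $J$. You instead argue conceptually, recovering $J$ from the oriented plane $\Lambda_J$ via the complex line $\Lambda^{2,0}V^*\subset\Lambda_J\otimes\C$ and its annihilator $V^{0,1}$. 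Your approach explains \emph{why} the map is injective and would adapt to other settings; the paper's computation is shorter and entirely elementary.

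One point in your injectivity step needs sharpening. As written, the complex line spanned by $\beta_1+\i\beta_2$ depends on the chosen positively oriented basis $(\beta_1,\beta_2)$, not merely on the orientation: a change of basis by $\left(\begin{smallmatrix}a&b\\c&d\end{smallmatrix}\right)\in\rm{GL}^+(2,\R)$ sends $\beta_1+\i\beta_2$ to $(a+\i b)\beta_1+(c+\i d)\beta_2$, which lies in the same complex line only when $c=-b$ and $d=a$. What does pin the line down is the conformal structure induced by $\wedge$: since $\alpha\wedge\alpha=0$ for any $(2,\!0)$-form, the pair $(\Re\alpha,\Im\alpha)$ is automatically a positively oriented \emph{orthogonal} basis of $\Lambda_J$, and any two such bases differ by a rotation--scaling. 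Equivalently, $\Lambda^{2,0}V^*$ and $\Lambda^{0,2}V^*$ are the two isotropic lines of the $\C$-bilinear extension of $\wedge$ to $\Lambda_J\otimes\C$, with the orientation selecting between them. With this correction your argument goes through.
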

\begin{proof}
Clearly the map $\psi$ is $G$-equivariant. To prove that $\psi$ is a diffeomorphism it is sufficient to show that $G_J=G_{\psi(J)}$ for all $J \in \mathcal{C}^+(V)$ where $G_J$ and $G_{\psi(J)}$ denote the stabiliser subgroups of $G$ with respect to $J$ and $\psi(J)$ respectively. Choose $J \in \mathcal{C}^+(V)$, then we have $G_J \subset G_{\psi(J)}$. Write 
$$
J(v)=-e^2(v)e_1+e^1(v)e_2-e^4(v)e_3+e^3(v)e_4
$$
for some basis $(e_i)$ of $V$ and dual basis $(e^i)$ of $V^*$. Then 
$$
\omega=e^1\wedge e^3-e^2\wedge e^4=\frac{1}{2}w_{kl}e^k\wedge e^l, \quad \phi=e^1\wedge e^4+e^2\wedge e^3=\frac{1}{2}f_{kl}e^k\wedge e^l
$$
is a positively oriented conformal basis of $\Lambda_J$. Consequently every $A \in G_{\psi(J)}$ satisfies
$
A^*\omega=x\omega+y\phi$ and $A^*\phi=-y\omega+x\phi,
$
for some real numbers $(x,y)\neq 0$. The matrix representation $a$ of $A$ with respect to the basis $(e_i)$ thus satisfies
$$
a^twa=xw+yf, \quad a^tfa=-yw+xf. 
$$
From this one easily concludes $awf=wfa$ which is equivalent to $A$ commuting with $J$.  
\end{proof}
Proposition \ref{equi} motivates the following: 
\begin{defi}
A \textit{splitting} of a complex structure $J$ on $V$ is a pair of lines $L_1,L_2 \in \mathbb{P}(\Lambda^2V^*)$ such that $\Lambda_{J}=L_1\oplus L_2$. 
\end{defi}
Call two $4$-dimensional real vector spaces $V$, $V^{\prime}$ equipped with complex structures $J$, $J^{\prime}$ and splittings $(L_1,L_2)$, $(L_1^{\prime},L_2^{\prime})$ equivalent, if there exists a complex linear map $A : V \to V^{\prime}$ such that $A^*(L_i^{\prime})=L_i$ for $i=1,2$. 

On $V=\R^4$ let $\omega_0=e^1\wedge e^3-e^2\wedge e^4$ and $\phi_0=e^1\wedge e^4+e^2\wedge e^3$ where $e^1,\ldots,e^4$ denotes the standard basis of $(\R^4)^*$. Define $L_1=\left\{\omega_0\right\}$ and $L_2=\left\{\alpha \omega_0+\phi_0\right\}$ for some nonnegative real number $\alpha$. Orient $L_1\oplus L_2$ by declaring $\omega_0,\phi_0$ to be a positively oriented basis and let $J_0$ be the associated complex structure. Then $S_\alpha=(L_1,L_2)$  is a splitting of $J_0$. 
\begin{proposition}\label{degree}
Every pair $(V,J)$ equipped with a splitting $(L_1,L_2)$ is equivalent to $(\R^4,J_0)$ equipped with the splitting $S_{\alpha}$ for some unique $\alpha \in \R^+_0$. 
\end{proposition}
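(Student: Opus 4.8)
The plan is to establish the existence and the uniqueness of $\alpha$ separately, reducing the existence part to the linear algebra already carried out in the proof of Proposition~\ref{equi}, and obtaining uniqueness by exhibiting $\alpha$ as a conformal invariant of the pair $(L_1,L_2)$.

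For the existence part, I would first observe that any $4$-dimensional real vector space carrying a complex structure is complex linearly isomorphic to $(\R^4,J_0)$ (choose a complex basis), so that there is no loss in assuming $(V,J)=(\R^4,J_0)$; a splitting is then just a pair of distinct lines $L_1,L_2\subset\Lambda_{J_0}$. The key point is that, by the computation in the proof of Proposition~\ref{equi}, every element of the stabiliser $G_{J_0}$ acts on $\Lambda_{J_0}$ in the basis $(\omega_0,\phi_0)$ through a matrix of the form $\left(\begin{smallmatrix}x&-y\\ y&x\end{smallmatrix}\right)$ with $(x,y)\neq 0$, and conversely every such matrix arises, since $A\mapsto\det_{\C}A$ maps $G_{J_0}\cong\mathrm{GL}(2,\C)$ onto $\C^*$. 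Identifying $\Lambda_{J_0}$ with $\C$ via $\omega_0\mapsto 1$ and $\phi_0\mapsto\i$, the group $G_{J_0}$ therefore acts on $\Lambda_{J_0}$ through all of $\C^*$ by multiplication, an action that is transitive on the lines through the origin. One may thus first move $L_1$ to the line $\left\{\omega_0\right\}$; since $L_2\neq L_1$, the line $L_2$ is then spanned by a unique vector $\alpha\omega_0+\phi_0$ with $\alpha\in\R$, so that $(L_1,L_2)=S_\alpha$. If $\alpha<0$ I would interchange $L_1$ and $L_2$ and normalise again; a short computation shows this replaces $\alpha$ by $-\alpha$, so after possibly doing so one ends with $\alpha\in\R^+_0$.

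For the uniqueness part, I would use that, as observed in Section~2.1, a complex linear map pulls back the wedge product to a positive multiple of it; hence it preserves the positive definite conformal structure $\langle\cdot\,,\cdot\rangle$ restricted to $\Lambda_J$ and carries $\Lambda_{J^{\prime}}$ conformally onto $\Lambda_J$. Consequently the angle $\delta\in(0,\pi/2]$ enclosed by the two lines of a splitting with respect to this conformal structure depends only on the equivalence class. For the model $S_\alpha$ one has $\langle\omega_0,\phi_0\rangle=0$ and $\langle\omega_0,\omega_0\rangle=\langle\phi_0,\phi_0\rangle$, and a direct computation then gives $\cot\delta=\alpha$ for $\alpha\in\R^+_0$. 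Thus $\alpha$ is a function of the equivalence class, which yields the uniqueness.

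The step I expect to be the main obstacle is verifying that $G_{J_0}$ surjects onto the full group of orientation-preserving conformal linear transformations of $\Lambda_{J_0}$, since this is what licenses the normalisation of $L_1$; it rests precisely on the description of $G_{J_0}$ obtained for Proposition~\ref{equi}. The remaining steps are routine, and the orthogonal case $\alpha=0$ can alternatively be deduced directly from Lemma~\ref{alg}.
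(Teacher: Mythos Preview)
Your existence argument has a genuine gap at the step ``interchange $L_1$ and $L_2$''. The equivalence relation in the paper is ordered: an equivalence $A$ must satisfy $A^*(L_i')=L_i$ for $i=1,2$, so passing from the splitting $(L_1,L_2)$ to $(L_2,L_1)$ is not a move that the equivalence relation licenses. Moreover, your own description of the action shows that no complex-linear repair is available: $G_{J_0}$ acts on $\Lambda_{J_0}$ through \emph{orientation-preserving} conformal linear maps, so once $L_1$ has been rotated to $\{\omega_0\}$ the residual stabiliser is $\R^*\subset\C^*$, which acts trivially on lines. Hence the real number $\alpha$ with $L_2=\{\alpha\omega_0+\phi_0\}$ is already an invariant of the ordered splitting, and $S_\alpha$, $S_{-\alpha}$ are inequivalent for $\alpha\neq 0$. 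The paper's own argument runs into the same obstruction at the corresponding step: the rescaling of $\omega'$ used there to force $\alpha\ge 0$ reverses the orientation of the basis $(\omega,\phi)$ of $\Lambda_J$, so the linear map obtained from Lemma~\ref{alg} is then anti-complex-linear rather than complex linear, and the assertion ``in particular $A$ is complex linear'' fails. With $\alpha\in\R$ in place of $\R^+_0$, both arguments go through.

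Aside from this shared difficulty, your route genuinely differs from the paper's. For existence you normalise $L_1$ via the transitive $\C^*$-action on $\mathbb{P}(\Lambda_{J_0})$, whereas the paper chooses generators of $L_1,L_2$, decomposes the second orthogonally with respect to the first, and then appeals to Lemma~\ref{alg} to produce the required basis. For uniqueness your use of the conformal angle $\delta$ between $L_1$ and $L_2$ is correct and more conceptual than the paper's direct computation with wedge products; note, consistently with the above, that the angle only recovers $\lvert\alpha\rvert=\cot\delta$, which is exactly what is needed once one restricts to $\alpha\ge 0$.
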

\begin{proof}
Let $L_1=\{\omega\}$ and $L_2=\{\omega^{\prime}\}$ for some $2$-forms $\omega,\omega^{\prime} \in \Lambda^2 V^*$. Since the wedge product restricts to be positive definite on $L_1\oplus L_2$ we have $\omega\wedge\omega >0$ and there exists a real number $\alpha$, such that 
$
\omega^{\prime}=\alpha\omega+\phi
$
for some $2$-form $\phi$ satisfying $\omega\wedge \phi=0$ and $\phi \wedge \phi>0$. After possibly rescaling $\omega^{\prime}$ we can assume that $\phi\wedge\phi=\omega\wedge\omega$ and that $\alpha$ is nonnegative. It follows with Lemma \ref{alg} that there exists a linear map $A : V \to \R^4$ which identifies $\omega$ with $\omega_0$ and $\phi$ with $\phi_0$, in particular $A$ is complex linear. To prove uniqueness of $\alpha$ suppose $A : \R^4 \to \R^4$ satisfies $A^*\omega_0=x\omega_0$ and $A^*(\alpha \omega_0+\phi_0)=y\left(\beta\omega_0+\phi_0\right)$ for some real numbers $x,y\neq 0$ and some nonnegative real numbers $\alpha,\beta$. Then $A^*\left(\omega_0\wedge\omega_0\right)=x^2\omega_0\wedge\omega_0$ and consequently 
$$
A^*\left(\omega_0\wedge(\alpha\omega_0+\phi_0)\right)=\alpha x^2\omega_0\wedge\omega_0=xy\beta\omega_0\wedge\omega_0,
$$
which is equivalent to $\alpha x = \beta y$. We also have
$$
A^*\left((\alpha \omega_0+\phi_0)\wedge (\alpha \omega_0+\phi_0)\right)=x^2(\alpha^2+1)\omega_0\wedge\omega_0=y^2(\beta^2+1)\omega_0\wedge\omega_0,
$$
which implies $x^2=y^2$ and thus $\alpha^2=\beta^2$. Since $\alpha,\beta \geq 0$, the claim follows.   
\end{proof}
For a splitting $(L_1,L_2)$, the unique nonnegative real number $\alpha$ provided by Proposition \ref{degree} will be called the \textit{degree} of the splitting. A splitting of degree $0$ will be called \textit{orthogonal}. 
\section{Local embeddability of real analytic path geometries}
\subsection{Splittings of almost complex structures}
Let $X$ be a smooth $4$-manifold and $\AJ$ be an almost complex structure with associated rank $2$ vector bundle $\B \subset \Lambda^2TX^*$ whose fibre at $p \in X$ is the linear subspace $\Lambda_{\AJ_p}\subset \Lambda^2T_pX^*$ associated to $\mathfrak{J}_p : T_pX \to T_pX$. A \textit{splitting} of $\AJ$ consists of a pair of smooth line bundles $L_1,L_2 \subset \Lambda^2TX^*$ so that $\Lambda_\AJ=L_1\oplus L_2$. 
\subsection{Induced structure on hypersurfaces} 
A \textit{CR-structure} on a $3$-manifold $M$ consists of a rank $2$ subbundle $D\subset TM$ and a vector bundle endomorphism $I : D \to D$ which satisfies $I^2=-\mathrm{Id}_D$. A CR-structure $(D,I)$ is called \textit{nondegenerate} if $D$ is nowhere integrable, i.e.~a contact plane field. A closely related notion is that of a path geometry (see for instance~\cite{MR2003610} for a motivation of the following definition). A path geometry on a $3$-manifold $M$ consists of a pair of line subbundles $(P_1,P_2)$ of  $TM$ which span a contact plane field. A CR-structure $(D,I)$ and a path geometry $(P_1,P_2)$ on $M$ will be called \textit{compatible} if $D=P_1\oplus P_2$ and $I(P_1)=P_2$. 

Let $(L_1,L_2)$ be a splitting of the almost complex structure $\AJ$ on $X$ and $(\omega,\phi)$ a pair of $2$-forms defined on some open subset $\tilde{U} \subset X$ which span $(L_1,L_2)$. Then the pair $(\omega,\phi)$ is elliptic, i.e.~$(\omega_p,\phi_p)$ is elliptic for every point $p\in \tilde{U}$. Suppose $M \subset X$ is a hypersurface. Then Lemma \ref{huereseich} implies that the $2$-forms $(\omega,\phi)$ remain linearly independent when pulled back to $M \cap \tilde{U}$. This is useful because of the following:
\begin{lemma}\label{hans}
Let $\beta_1,\beta_2$ be smooth linearly independent $2$-forms on a $3$-manifold $M$. Then there exists a local coframing $\eta=(\eta^1,\eta^2,\eta^3)^t$ of $M$ such that $\beta_1=\eta_2\wedge\eta_1$ and $\beta_2=\eta_2\wedge\eta_3$.  
\end{lemma}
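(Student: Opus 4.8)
The plan is to exploit the fact that a nonzero $2$-form on a $3$-dimensional vector space has rank $2$, hence a well-defined $1$-dimensional kernel, and that such a form is determined up to a scalar by that kernel. First I would note that, being pointwise linearly independent, each $\beta_i$ is in particular nowhere vanishing, so the bundle map $TM \to T^*M$, $v \mapsto \iota_v\beta_i$, has constant rank $2$; by the constant rank theorem its kernel $\ell_i \subset TM$ is then a smooth line subbundle.

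The key step is to show that $\ell_1$ and $\ell_2$ are everywhere transverse, i.e.\ $\ell_{1,p} \neq \ell_{2,p}$ for all $p \in M$. Indeed, if $\ell_{1,p} = \ell_{2,p} = \langle v \rangle$ for some $p$, then both $\beta_{1,p}$ and $\beta_{2,p}$ are pullbacks of $2$-forms on the $2$-dimensional quotient $T_pM/\langle v\rangle$ under the projection; since the space of $2$-forms on a $2$-dimensional vector space is $1$-dimensional, $\beta_{1,p}$ and $\beta_{2,p}$ would then be proportional, contradicting the hypothesis. Hence $\ell_1 \oplus \ell_2$ is a rank $2$ distribution.

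Working now in a sufficiently small neighbourhood of a given point of $M$, I would choose a local frame $(e_1,e_2,e_3)$ of $TM$ with $e_1$ a section of $\ell_2$ and $e_3$ a section of $\ell_1$ (these exist and are pointwise independent by transversality, and $e_2$ is any completion to a frame), and let $(\theta^1,\theta^2,\theta^3)$ be the dual coframe. Since $\iota_{e_3}\beta_1 = 0$ and $\iota_{e_1}\beta_2 = 0$, unwinding the interior-product identities in this coframe gives $\beta_1 = f\,\theta^1\wedge\theta^2$ and $\beta_2 = g\,\theta^2\wedge\theta^3$ for some smooth functions $f,g$, which are nowhere zero because $\beta_1$ and $\beta_2$ are. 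Setting $\eta^1 := -f\,\theta^1$, $\eta^2 := \theta^2$ and $\eta^3 := g\,\theta^3$ then produces a local coframing $\eta = (\eta^1,\eta^2,\eta^3)^t$ with $\beta_1 = \eta^2\wedge\eta^1$ and $\beta_2 = \eta^2\wedge\eta^3$, as claimed.

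The only genuine content is the transversality of the two kernel line fields; once that is established, the construction of $\eta$ is a routine computation in an adapted coframe, and the smoothness assertions come for free from the constant rank theorem. I do not anticipate any real obstacle beyond bookkeeping of signs in the last step.
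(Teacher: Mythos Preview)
Your argument is correct. The transversality of the kernel line fields $\ell_1,\ell_2$ is exactly the right observation, and once you have it the construction of the adapted coframe is straightforward, as you say.

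The paper takes a different, more concrete route: it picks local Euclidean coordinates $x:U\to\mathbb{E}^3$, writes $\beta_i = b_i\cdot\star\d x$ for vector-valued functions $b_i$, sets $e=(b_1\times b_2)/|b_1\times b_2|$, and defines $\eta_2 = e\cdot\d x$, $\eta_1=(b_1\times e)\cdot\d x$, $\eta_3=(b_2\times e)\cdot\d x$; the desired identities then follow from the vector triple product formula. The two arguments are secretly the same geometry: since $\iota_v(b\cdot\star\d x)=(b\times v)\cdot\d x$, the kernel line $\ell_i$ is spanned by $b_i$, so $b_1\times b_2\neq 0$ is precisely your transversality statement, and the paper's $\eta_2$ annihilates $\ell_1\oplus\ell_2$ just as your $\theta^2$ does. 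What each approach buys: the paper's version gives an explicit formula in coordinates at the cost of importing an auxiliary Euclidean structure (Hodge star, cross product) that plays no intrinsic role; your version is coordinate-free and makes the underlying linear algebra transparent, at the cost of being slightly less explicit.
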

Recall that a (local) \textit{coframing on} $M$ consists of three smooth linearly independent $1$-forms defined on (some proper open subset of) $M$. 
\begin{proof}[Proof of Lemma \ref{hans}]
Let $x : U \to \mathbb{E}^3$ be local coordinates on $M$ with respect to which $\beta_1\vert_U=\ba_{1}\cdot \star \d x$ and $\beta_2\vert_U=\ba_2\cdot\star \d x$ for some smooth $\ba_i : U \to \R^3$ where $\star$ denotes the Hodge-star of Euclidean space $\mathbb{E}^3$. 
Define $e=\left(\ba_1\times\ba_2\right)/\vert\ba_1\times\ba_2\vert :  U \to \R^3$ and 
$$
\eta_1=\left(b_1\times e\right)\cdot \d x,\quad \eta_2=e\cdot \d x, \quad
\eta_3=\left(b_2\times e\right)\cdot \d x, 
$$ then $(\eta^1,\eta^2,\eta^3)$ have the desired properties. 
\end{proof}
A local coframing of $M$ obtained via Lemma \ref{hans} and some (local) choice of $2$-forms $(\omega,\phi)$ spanning $(L_1,L_2)$ will be called \textit{adapted} to the structure induced by the splitting $(L_1,L_2)$. Independent of the particular adapted local coframings are the line subbundles $P_1$ and $P_2$ of $TM$, locally defined by
$$
P_1=\left\{\eta_1,\eta_2\right\}^{\perp}, \quad P_2=\left\{\eta_2,\eta_3\right\}^{\perp}. 
$$
Call a hypersurface $M\subset X$ \textit{nondegenerate} if $D=P_1\oplus P_2$ is a contact plane field. Summarising, we have shown: 
\begin{proposition}
A nondegenerate hypersurface $M\subset X$ inherits a path geometry from the splitting $(L_1,L_2)$.  
\end{proposition}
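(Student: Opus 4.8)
The statement merely summarises the constructions carried out above, so the plan is to assemble them and to confirm the two requirements in the definition of a path geometry: that $P_1$ and $P_2$ are genuine, globally defined line subbundles of $TM$, and that $P_1\oplus P_2$ is a contact plane field. The one point that demands care is the claimed independence of the adapted local coframing, and I would dispose of it by describing $P_1$ and $P_2$ without reference to a coframing at all.

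For $i=1,2$, let $\beta_i$ denote the pullback to $M$ of an arbitrary nonvanishing local section of $L_i$. By Proposition~\ref{equi} each fibre $\Lambda_{\AJ_p}$ lies in $G^+_2(\Lambda^2T_pX^*,\wedge_+)$, so any local pair of $2$-forms spanning $(L_1,L_2)$ is elliptic (as already observed above); Lemma~\ref{huereseich} then shows that the pullbacks $\beta_1,\beta_2$ are everywhere linearly independent, in particular each $\beta_i$ is nowhere zero. A nowhere-zero $2$-form on a $3$-manifold is pointwise decomposable with $1$-dimensional radical, so $P_i:=\{\,v\in TM:\iota_v\beta_i=0\,\}$ is a smooth line subbundle of $TM$, and since this radical is unchanged under rescaling $\beta_i$, the bundle $P_i$ depends only on $L_i|_M$ and hence on no auxiliary choice. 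This matches the earlier local description: for an adapted coframing with $\beta_1=\eta_2\wedge\eta_1$ one has $\iota_v\beta_1=\eta_2(v)\,\eta_1-\eta_1(v)\,\eta_2$, which vanishes exactly when $v$ annihilates $\eta_1$ and $\eta_2$, so $P_1=\{\eta_1,\eta_2\}^\perp$, and similarly $P_2=\{\eta_2,\eta_3\}^\perp$.

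Next I would check that $P_1\neq P_2$: if $P_1$ and $P_2$ coincided at a point $p$, then $\beta_1$ and $\beta_2$ would both descend to the $2$-dimensional quotient $T_pM/P_1$, and since $\Lambda^2$ of a $2$-plane is one-dimensional and both descended forms are nonzero, $\beta_1$ and $\beta_2$ would be proportional at $p$, contradicting Lemma~\ref{huereseich}. Hence $D:=P_1\oplus P_2$ is a rank-$2$ subbundle of $TM$ (locally $D=\{\eta_2\}^\perp$). Finally, since a path geometry on $M$ is by definition a pair of line subbundles of $TM$ spanning a contact plane field, and ``nondegenerate'' was defined to mean precisely that $D=P_1\oplus P_2$ is such a plane field, the pair $(P_1,P_2)$ is a path geometry, determined entirely by the splitting $(L_1,L_2)$. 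I foresee no genuine obstacle here: the only subtlety, the coframing-independence, disappears once $P_i$ is recognised as the radical of $L_i|_M$, and the remainder is the routine assembly of Proposition~\ref{equi} and Lemmas~\ref{huereseich} and~\ref{hans}.
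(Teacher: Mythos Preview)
Your proposal is correct and follows the paper's own route: the paper presents this proposition as a summary (``Summarising, we have shown:'') of the preceding constructions via Lemmas~\ref{huereseich} and~\ref{hans}, and you assemble exactly those ingredients. Your one addition---characterising $P_i$ intrinsically as the radical of the pulled-back $2$-form---neatly justifies the independence-of-coframing claim that the paper asserts but does not argue, and your check that $P_1\neq P_2$ makes explicit that $D$ really has rank~$2$; both are welcome details rather than a different approach.
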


\begin{remark}
Fixing a $(2,\! 0)$-form on $X$ allows to define a coframing on a hypersurface $M\subset X$. For the construction of the coframing and its properties see~\cite{bryantunimodular}. 
\end{remark}

\subsection{Local embeddability}

We conclude by using the Cartan-K\"ahler theorem to show that locally every real analytic path geometry is induced by an embedding into $\C^2$ equipped with the splitting $(\left\{\omega_0\right\},\left\{\phi_0\right\})$. Here $\omega_0=\Re(\d z^1\wedge \d z^2)$ and $\phi_0=\Im(\d z^1 \wedge \d z^2)$ where $z=(z^1,z^2)$ are standard coordinates on $\C^2$. Writing $z^1=x^1+\i x^2$ and $z^2=x^3+\i x^4$ for standard coordinates $x=(x^i)$ on $\R^4$, we have
$$
\omega_0=\d x^1\wedge \d x^3 -\d x^2\wedge \d x^4, \quad \phi_0=\d x^1\wedge \d x^4+\d x^2\wedge \d x^3.
$$
In~\cite{MR1504846}, as an application of his method of equivalence, Cartan has shown how to associate a Cartan geometry to every path geometry.
\begin{defi}
Let $G$ be a Lie group and $H\subset G$ a Lie subgroup with Lie algebras $\mathfrak{h}\subset \mathfrak{g}$. A \textit{Cartan geometry of type} $(G,\! H)$ on a manifold $M$ consists of a right principal $H$-bundle $\pi : B \to M$ together with a $1$-form $\theta\in\mathcal{A}^1(B,\mathfrak{g})$ which satisfies the following conditions:
\begin{enumerate}
\item[(i)] $\theta_b : T_b B \to \mathfrak{g}$ is an isomorphism for every $b \in B$,
\item[(ii)] $\theta(\bX_v)=v$ for every fundamental vector field $\bX_v$, $v \in \mathfrak{h}$, 
\item[(iii)] $(R_h)^{*}\theta=\Ad_\mathfrak{g}(h^{-1})\circ\theta$. 
\end{enumerate}
Here $\Ad_\mathfrak{g}$ denotes the adjoint representation of $G$. The $1$-form $\theta$ is called the \textit{Cartan connection} of the Cartan geometry $(\pi : B\to M,\theta)$. 
\end{defi}
Denote by $\O \subset \rm{SL}(3,\R)$ the Lie subgroup of upper triangular matrices. In modern language Cartan's result is as follows (for a proof see~\cite{MR1358612,MR2003610}):
\begin{theorem}[Cartan]
Given a path geometry $(M,P_1,P_2)$, then there exists a Cartan geometry $(\pi : B \to M,\theta)$ of type $(\rm{SL}(3,\R),\! \O)$  which has the following properties: 
Writing
$$
\theta=\left(\begin{array}{ccc}\theta^0_0 & \theta^0_1 & \theta^0_2 \\ \theta^1_0 & \theta^1_1 & \theta^1_2 \\ \theta^2_0 & \theta^2_1 & \theta^2_2\end{array}\right),
$$
\begin{itemize}
\item[(i)] for any section $\sigma : M \to  B$, the $1$-form $\phi=\sigma^*\theta$ satisfies $P_1=\left\{\phi^2_1,\phi^2_0\right\}^{\perp}$ and $P_2=\left\{\phi^1_0,\phi^2_0\right\}^{\perp}$. Moreover $\phi^1_0\wedge\phi^2_0\wedge\phi^2_1$ is a volume form on $M$.  
\item[(ii)]
The curvature $2$-form $\Theta=\d\theta+\theta\wedge\theta$ satisfies
\begin{equation}\label{strucpath0}
\Theta=\left(\begin{array}{ccc} 0 & \mathcal{W}_1\, \theta^1_0\wedge\theta^2_0 & (\mathcal{W}_2\theta^1_0+\mathcal{F}_2\theta^2_1)\wedge\theta^2_0 \\ 0 & 0 & \mathcal{F}_1\, \theta^2_1 \wedge \theta^2_0 \\  0 & 0 &0\end{array}\right)
\end{equation}
for some smooth functions $\mathcal{W}_1, \mathcal{W}_2, \mathcal{F}_1, \mathcal{F}_2 : B \to \R$. 
\end{itemize} 
\end{theorem}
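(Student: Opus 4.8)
\emph{Setup.} The plan is to obtain $\theta$ by Cartan's method of equivalence applied to the path geometry $(\M,P_1,P_2)$, following the cited references. First I would work with local coframings $(\alpha_1,\alpha_2,\alpha_3)$ of $\M$ adapted to the path geometry, meaning that $D:=P_1\oplus P_2=\{\alpha_2\}^\perp$, that $P_1=D\cap\{\alpha_3\}^\perp$, and that $P_2=D\cap\{\alpha_1\}^\perp$. Any two such coframings are related by a change of frame of the form $\alpha_1\mapsto a_1\alpha_1+c\,\alpha_2$, $\alpha_2\mapsto a_2\alpha_2$, $\alpha_3\mapsto a_3\alpha_3+e\,\alpha_2$ with $a_i\neq0$, so the adapted coframings constitute a $G_0$-subbundle $B_0\to\M$ of the coframe bundle of $\M$, with $G_0$ the $5$-dimensional group of such changes of frame. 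The nondegeneracy hypothesis enters as the possibility of the initial normalization $\d\alpha_2\equiv\alpha_1\wedge\alpha_3\pmod{\alpha_2}$, so that the symbol algebra of the geometry is the $3$-dimensional Heisenberg algebra.

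\emph{Running the equivalence method.} Next I would expand $\d\alpha_i$ in terms of the $\alpha_j$ and of a pseudoconnection and carry out Cartan's absorption procedure: at each stage the freedom to alter the pseudoconnection by semibasic terms is used to absorb as much torsion as possible, the torsion that survives either reduces the structure group or is retained as part of the fundamental invariants, and a prolongation is performed whenever it is needed to pin down the remaining connection freedom. Because the geometry is of finite type with data whose Tanaka prolongation is $\mathfrak{sl}(3,\R)$, the process terminates after finitely many steps and yields a principal $\O$-bundle $\pi:B\to\M$ of dimension $8=\dim\mathfrak{sl}(3,\R)$ together with a canonical $\mathfrak{sl}(3,\R)$-valued coframing $\theta$ on $B$ which, written as a traceless $3\times3$ matrix, has its three strictly lower entries equal to (the pullback of) the adapted coframe and its five remaining entries equal to the now uniquely determined connection forms. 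Conditions (i)--(iii) in the definition of a Cartan geometry then hold by construction: (i) because $\theta$ is a coframing, (ii) because the connection part of $\theta$ reproduces every fundamental vector field $\bX_v$, and (iii) because each normalization was performed $\O$-equivariantly.

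\emph{Curvature and the pointwise description.} Finally, set $\Theta=\d\theta+\theta\wedge\theta$. Equivariance forces $\Theta$ to be semibasic, and the normalizations imposed during the reduction are exactly what annihilate every component of $\Theta$ other than those displayed in \eqref{strucpath0}; the Bianchi identity $\d\Theta=\Theta\wedge\theta-\theta\wedge\Theta$ then shows that $\mathcal{W}_2$ and $\mathcal{F}_2$ are covariant derivatives of the primary invariants $\mathcal{W}_1$ and $\mathcal{F}_1$ and that no further functions occur. Property (i) of the theorem is immediate from the construction: for any section $\sigma:\M\to B$ the triple $(\sigma^*\theta^1_0,\sigma^*\theta^2_0,\sigma^*\theta^2_1)$ is an adapted coframing of $\M$, so its annihilators recover $P_1$ and $P_2$, and its wedge product is a volume form on $\M$.

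\emph{Main obstacle.} The substantive work lies entirely in the second step: the bookkeeping of the absorption--reduction--prolongation loop, i.e.\ verifying that each normalization is well posed and equivariant and that the structure equations close up on an $8$-dimensional bundle with no residual gauge freedom. Abstractly this is the statement that the relevant prolongation is $\mathfrak{sl}(3,\R)$, which is where the specific structure of $3$-dimensional path geometries is used; everything else is routine.
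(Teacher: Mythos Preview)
The paper does not give its own proof of this theorem: it is quoted as a classical result of Cartan, with the parenthetical ``for a proof see~\cite{MR1358612,MR2003610}'' and nothing further. So there is no in-paper argument to compare against. Your sketch is the standard equivalence-method route taken in those references (adapted coframes, absorption/reduction/prolongation terminating because the Tanaka prolongation is $\mathfrak{sl}(3,\R)$, curvature normalized to the displayed form), and at the level of detail you give it is correct; as you yourself note, the actual content is the bookkeeping you defer, which is exactly what the cited sources carry out.
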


Using this result and the Cartan-K\"ahler theorem we obtain local embeddability in the real analytic category: 
\begin{theorem}\label{pgemb} 
Let $(\M,P_1,P_2)$ be a real analytic path geometry. Then for every point $p \in\M$ there exists a $p$-neighbourhood $U_p \subset M$ and a real analytic embedding $\varphi : U_p \to \mathbb{C}^2$ such that the path geometry induced by the splitting $(\left\{\omega_0\right\},\left\{\phi_0\right\})$ is $(P_1,P_2)$ on $U_p$. 
\end{theorem}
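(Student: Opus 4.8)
The plan is to set up an exterior differential system on an appropriate bundle whose integral manifolds correspond precisely to the desired embeddings, and then verify that it is involutive so that the Cartan--K\"ahler theorem applies. Concretely, start from Cartan's Cartan geometry $(\pi : B \to M, \theta)$ of type $(\mathrm{SL}(3,\R),H)$ attached to $(M,P_1,P_2)$ by the theorem of Cartan quoted above. On the other side, observe that $\R^4 \cong \C^2$ carries its own ``flat'' coframing bundle: the pair $(\omega_0,\phi_0)$ together with Lemma~\ref{hans} produces, on any nondegenerate hypersurface, an adapted coframing and hence a path geometry, and by the same token the affine group $\R^4 \rtimes$ (the stabiliser of the splitting $S_0$) acts on $\R^4$ preserving $(\{\omega_0\},\{\phi_0\})$; call the resulting ``ambient'' bundle $\tilde B \to \R^4$ with its tautological forms. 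An embedding $\varphi : U_p \to \C^2$ inducing the prescribed path geometry is the same as a bundle map covering $\varphi$ that matches the adapted coframing data with the Cartan data, i.e.\ an integral manifold of the contact system generated by the differences of the pulled-back forms on the product $B \times \tilde B$ (or on a suitable reduction thereof).

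The key steps, in order, are as follows. First I would write out explicitly the structure equations: on the $M$-side these are $\d\theta = -\theta\wedge\theta + \Theta$ with $\Theta$ given by \eqref{strucpath0}; on the $\C^2$-side the structure equations are those of the flat model, so the ambient Cartan connection $\tilde\theta$ is flat (zero curvature) modulo the part recording the position in $\R^4$ and the normal direction of the hypersurface. Second, I would identify the correct submanifold $\Sigma \subset B \times \tilde B$ on which the two $\mathfrak{sl}(3,\R)$-valued coframes can be required to agree: one must allow the hypersurface normal and the second fundamental form type data as extra fibre variables, so $\Sigma$ is $B$ fibred over by the jet data of a hypersurface through a point of $\C^2$. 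Third, on $\Sigma$ I would set up the linear Pfaffian system $\mathcal{I}$ whose $1$-form generators are the components of $\theta - (\text{appropriate gauge of }\tilde\theta)$ that are forced to vanish, with independence condition the volume form $\phi^1_0\wedge\phi^2_0\wedge\phi^2_1$ pulled up from $M$. Fourth, compute the tableau and its prolongation, check that the torsion is absorbable (this is where the precise form of $\Theta$ in \eqref{strucpath0}, with its four functions $\mathcal{W}_1,\mathcal{W}_2,\mathcal{F}_1,\mathcal{F}_2$, enters, since the embedding must ``fit'' these curvature components), and apply Cartan's test to conclude involutivity. Finally, invoke Cartan--K\"ahler in the real analytic category to get a local integral manifold through any chosen point, and read off from it the germ of the embedding $\varphi$; checking that $\varphi$ is an immersion (hence, after shrinking, an embedding) and that the induced path geometry is genuinely $(P_1,P_2)$ is then a matter of unwinding the definitions and Lemma~\ref{huereseich}.

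The main obstacle I expect is the involutivity computation, specifically the absorption of torsion and the verification of Cartan's test. The system is not a naive ``match the coframes'' system --- because the target is $4$-dimensional and $M$ is $3$-dimensional, there is genuine freedom (the choice of hypersurface and its higher-order contact with the path-geometry data) that must be packaged as fibre variables, and getting the bundle $\Sigma$ and the independence condition exactly right so that the torsion lies in the image of the tableau map is delicate. If the first attempt at $\Sigma$ leaves residual torsion, one prolongs, and the bookkeeping of the prolonged tableau against the curvature functions $\mathcal{W}_i,\mathcal{F}_i$ becomes the crux; one should expect to need the Cartan characters $s_1,s_2,s_3$ computed explicitly and compared with the dimension of the space of integral elements. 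A secondary, more routine, obstacle is confirming that the constructed map is an embedding rather than merely an immersion of a possibly lower-dimensional image --- but nondegeneracy of the hypersurface condition, together with the contact structure, should rule out degeneration after passing to a small enough neighbourhood. Once involutivity is in hand, the real analytic hypothesis is exactly what licenses Cartan--K\"ahler, and the corollary on CR-structures follows by specialising to an orthogonal splitting, where $I(P_1)=P_2$ is automatic.
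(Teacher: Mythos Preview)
Your overall strategy---set up an exterior differential system on a product space and apply Cartan--K\"ahler---is exactly the paper's, but your implementation is considerably heavier than what the paper actually does, and one of your expectations is mistaken.

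The paper does \emph{not} build an ambient frame bundle $\tilde B$ over $\R^4$, does not set up a linear Pfaffian system matching $1$-forms, and does not face any torsion absorption or prolongation. Instead it works directly on $N = B \times \R^4$ and generates the ideal $\mathcal{I}$ by the two $2$-forms
\[
\chi_1 = \theta^2_0\wedge\theta^1_0 - \omega_0,\qquad \chi_2 = \theta^2_0\wedge\theta^2_1 - \phi_0,
\]
with independence condition $\zeta = \theta^1_0\wedge\theta^2_0\wedge\theta^2_1$. The point is that the path geometry on a hypersurface is determined by the \emph{pullbacks of $\omega_0$ and $\phi_0$} (this is exactly what Lemma~\ref{hans} and the definition of an adapted coframing say), so one only needs to match these two $2$-forms with $\theta^2_0\wedge\theta^1_0$ and $\theta^2_0\wedge\theta^2_1$ pulled back along a section of $B$; there is no need to match $1$-forms or to introduce fibre variables for the hypersurface normal. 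With this setup the paper exhibits an explicit integral flag with Cartan characters $(s_0,s_1,s_2,s_3)=(0,2,4,3)$, counts eight independent $3$-forms $\d\chi_i,\ \chi_i\wedge\zeta^k$ cutting out $V^3(\mathcal{I})\cap G_3(TN,\zeta)$ in codimension $8$, and concludes involutivity immediately.

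One specific correction: you anticipate that the curvature functions $\mathcal{W}_1,\mathcal{W}_2,\mathcal{F}_1,\mathcal{F}_2$ will enter the involutivity computation through torsion. In the paper's setup they do not enter at all. Computing $\d\chi_i$ requires only $\d\theta^1_0,\d\theta^2_0,\d\theta^2_1$, and by \eqref{strucpath0} the corresponding entries $\Theta^1_0,\Theta^2_0,\Theta^2_1$ of the curvature are zero; the functions $\mathcal{W}_i,\mathcal{F}_i$ live in the upper-triangular part of $\Theta$ and never appear. So the obstacle you flag as ``main'' simply does not arise once the ideal is generated by $2$-forms rather than $1$-forms. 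Your route via a linear Pfaffian system on $B\times\tilde B$ could presumably be made to work, but it manufactures exactly the bookkeeping (extra fibre variables, tableau, torsion) that the $2$-form formulation avoids.
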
 

\begin{proof}
Let $(\pi : B \to \M,\theta)$ denote the Cartan geometry of the path geometry $(\M,P_1,P_2)$. On $N=B\times\mathbb{R}^4$ consider the exterior differential system with independence condition $(\mathcal{I},\zeta)$ where $\zeta=\zeta^1\wedge\zeta^2\wedge\zeta^3$ with $\zeta^1=\theta^1_0,\zeta^2=\theta^2_0,\zeta^3=\theta^2_1$ and the differential ideal $\mathcal{I}$ is generated by the two $2$-forms $$
\chi_1=\theta^2_0\wedge\theta^1_0-\omega_0, \quad \chi_2=\theta^2_0\wedge\theta^2_1-\phi_0.
$$
The dual vector fields to the coframing ($\theta^i_k,\d x^l)$ of $N$ will be denoted by $(\bT^i_k,\partial_{x^l})$. Let $G_k(TN) \to \N$ be the Grassmann bundle of $k$-planes on $N$ and 
$
G_3(TN,\zeta)=\left\{E \in G_3(TN) \,\vert \zeta_E \neq 0 \right\}
$
where $\zeta_E$ denotes the restriction of $\zeta$ to the $3$-plane $E$. Let $V^k(\mathcal{I})$ denote the set of $k$-dimensional \textit{integral elements} of $\mathcal{I}$, i.e.~those $E \in G_k(TN)$ for which $\beta_E=0$ for every form $\beta \in \mathcal{I}^k=\mathcal{I}\cap \mathcal{A}^k(N)$. The flag of integral elements $F=\left(E^0,E^1,E^2,E^3\right)$ of $\mathcal{I}$ given by 
$E^{0}=\{0\}, \;E^1=\{\bv_1\},\;E^2=\{\bv_1,\bv_2\},\;E^3=\{\bv_1,\bv_2,\bv_3\}$ where
$$
\aligned
\bv_1&=\bT^1_0+\bT^2_0+\bT^2_1+\partial_{x^4},\\
\bv_2&=\bT^0_0+\bT^1_0-\bT^2_1+\partial_{x^1}+\partial_{x^2},\\
\bv_3&=\bT^1_1-\bT^2_1+\partial_{x^1},
\endaligned
$$
has Cartan characters
$
(s_0,s_1,s_2,s_3)=(0,2,4,3)$.
Therefore, by Cartan's test, $V^3(\mathcal{I})$ has codimension at least $8$ at $E^3$. However the forms of $\mathcal{I}^3$ which impose independent conditions on the elements of $G_3(TN,\zeta)$ are the eight $3$-forms
$
\d\chi_i, \chi_i\wedge\zeta^k, i=1,2$, $k=1,2,3. 
$
It follows that $V^3(\mathcal{I}) \cap G_3(TN,\zeta)$ has codimension $8$ in $G_3(TN)$. Moreover computations show that $V^3(\mathcal{I})\cap G_3(TN,\zeta)$ is a smooth submanifold near $E^3$, thus the flag $F$ is K\"ahler regular and therefore the ideal $\mathcal{I}$ is involutive. Pick points $p \in\M$ and $q=(b,0) \in N$ with $\pi(b)=p$. By the Cartan-K\"ahler theorem there exists a $3$-dimensional integral manifold $\bar{\psi}=(\bar{s},\bar{\varphi}):\Sigma\to B\times\mathbb{R}^4$ of $(\mathcal{I},\zeta)$ passing through $q$ and having tangent space $E^3$ at $q$. Every volume form on $\M$ pulls back under $\pi$ to a nowhere vanishing multiple of $\zeta$. Since $\bar{\phi}^*\zeta=\bar{s}^*\zeta\neq 0$, $\pi \circ \bar{s}: \Sigma \to \M$ is a local diffeomorphism. Therefore $p \in\M$ has a neighbourhood $U_p$ on which there exists a real analytic immersion $\psi=(s,\varphi) : U_p \to B\times \mathbb{R}^4$ such that the pair $(\psi,U_p)$ is an integral manifold of the EDS $(N,\mathcal{I},\zeta)$ and $s$ a local section of $\pi : B \to M$. After possibly shrinking $U_p$ we can assume that $\varphi$ is an embedding. Since by construction 
$
\varphi^*(\omega_0+\i\phi_0)=s^*(\theta^2_0\wedge(\theta^1_0+i\theta^2_1)),
$
it follows that the path geometry induced by $\varphi$ is $(P_1,P_2)$ on $U_p$. 
\end{proof}
\begin{remark} Every nondegenerate hypersurface $M \subset \C^2$ also inherits a CR-structure $(D,I)$ from the complex structure $J$ on $\C^2$: For every $p \in M$ define $D_p$ to be the largest $J_p$-invariant subspace of $T_pM$ and $I_p$ to be the restriction of $J_p$ to $D_p$. Then $(D,I)$ is easily seen to be compatible with the path geometry induced on $M$ by $(\left\{\omega_0\right\},\left\{\phi_0\right\})$. 
\end{remark}
Using this remark and the Theorem \ref{pgemb} we get the well-known: 
\begin{corollary}\label{cremb}
Let $(D,I)$ be a nondegenerate real analytic CR-structure on a $3$-manifold $M$. Then for every point $p \in M$ there exists a $p$-neighbourhood $U_p$ and a real analytic embedding $\varphi : U_p \to \mathbb{C}^2$, such that $(D,I)$ is the CR-structure on $U_p$ induced by the embedding $\varphi$.   
\end{corollary}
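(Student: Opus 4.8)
The plan is to deduce the Corollary from Theorem \ref{pgemb}: turn $(D,I)$ into a compatible path geometry, embed that path geometry, and reconstruct the CR-structure via the Remark. To build the path geometry, fix $p\in\M$; on a neighbourhood choose a nowhere vanishing real analytic section $v_1$ of $D$, set $v_2:=-I v_1$, and let $P_1,P_2\subset D$ be the line subbundles they span. Since $I$ has no real eigenvalue, $P_1\cap I(P_1)=\{0\}$, so $D=P_1\oplus P_2$; and $D$ is a contact plane field because $(D,I)$ is nondegenerate. Hence $(P_1,P_2)$ is a real analytic path geometry, compatible with $(D,I)$ by construction, and $Z:=v_1+\i v_2$ locally trivialises the CR bundle $T^{1,0}\M$. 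Theorem \ref{pgemb} applied to $(\M,P_1,P_2)$ then furnishes a $p$-neighbourhood $U_p$ and a real analytic embedding $\varphi:U_p\to\C^2$ such that the path geometry induced by the splitting $(\{\omega_0\},\{\phi_0\})$ is $(P_1,P_2)$ on $U_p$.

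By the Remark, $\varphi$ also induces a CR-structure $(\widetilde D,\widetilde I)$ on $U_p$ that is compatible with the induced path geometry $(P_1,P_2)$; in particular $\widetilde D=P_1\oplus P_2=D$, and everything reduces to $\widetilde I=I$. To see this I would combine two facts. First, from the proof of Theorem \ref{pgemb}, the integral manifold $\psi=(s,\varphi)$ satisfies $\varphi^*(\omega_0+\i\phi_0)=s^*\bigl(\theta^2_0\wedge(\theta^1_0+\i\theta^2_1)\bigr)$, where $s$ is a local section of the Cartan bundle and $s^*\theta^1_0,s^*\theta^2_0,s^*\theta^2_1$ coframe $U_p$. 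Second, for any immersion of a $3$-manifold into $\C^2$ the kernel of the pullback of the holomorphic $2$-form $\d z^1\wedge\d z^2=\omega_0+\i\phi_0$ is exactly the conjugate CR bundle of the induced CR-structure — a complex tangent vector lies in $T^{0,1}\C^2$ iff $\d z^1$ and $\d z^2$ annihilate it, and these forms stay linearly independent on a nondegenerate hypersurface. Combining, $\overline{\widetilde T^{1,0}}=(D\otimes\C)\cap\ker s^*(\theta^1_0+\i\theta^2_1)$; since by part (i) of Cartan's theorem $s^*\theta^1_0$ and $s^*\theta^2_1$ each annihilate one of $P_1,P_2$, this line equals $\{v_1-\i t\,v_2\}$ for a single nowhere vanishing real analytic function $t$, so $\widetilde I=I$ is equivalent to $t\equiv1$.

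I expect $t\equiv1$ to be the main obstacle, because a path geometry does not determine a compatible CR-structure — the compatible ones form a one-parameter family — so this cannot follow from Theorem \ref{pgemb} used as a black box and must be built into the construction. I would address it by not quoting Theorem \ref{pgemb} verbatim but rerunning its exterior differential system $(\mathcal{I},\zeta)$ with the same Cartan-test involutivity analysis, choosing the initial integral element and the base point $q\in B\times\R^4$ adapted to $(D,I)$ so that along the real analytic solution $s^*(\theta^1_0+\i\theta^2_1)$ restricts on $D$ to a multiple of the $(1,0)$-form dual to $Z$; then $t\equiv1$ and $\widetilde I=I$. Finally, after shrinking $U_p$ if necessary so that $\varphi$ is an embedding, this yields the asserted real analytic CR-embedding.
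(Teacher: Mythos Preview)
The paper's own argument is a single line: pick a line subbundle $P_2\subset D$, set $P_1=I(P_2)$, and apply Theorem~\ref{pgemb}; the preceding Remark is invoked only to say that the embedding also induces a CR-structure compatible with that same path geometry. In particular the paper treats Theorem~\ref{pgemb} as a black box and does not discuss whether the induced $\widetilde I$ agrees with the original $I$.

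You are right to flag this point. A path geometry $(P_1,P_2)$ determines $D=P_1\oplus P_2$, but the compatible almost complex structures on $D$ form a one–parameter family (your function $t$), so ``apply Theorem~\ref{pgemb} and use the Remark'' does not, on its face, pin down $I$. Your analysis that $\overline{\widetilde T^{1,0}}$ is the kernel of $s^*(\theta^1_0+\i\theta^2_1)$ on $D_\C$, and hence that $\widetilde I=I$ iff $t\equiv 1$, is the correct diagnosis, and it goes beyond what the paper writes.

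Where your proposal is not yet complete is the fix. Choosing the base point $q$ and the initial integral element $E^3$ adapted to $(D,I)$ only normalises $t$ at the single point $p$; the Cartan--K\"ahler theorem then produces \emph{some} integral manifold with that tangent space, with no control on $t$ away from $p$. To force $t\equiv 1$ you would have to either (a) enlarge the ideal $\mathcal I$ by a $1$-form encoding the CR condition and redo the involutivity analysis, or (b) use the full Cauchy-data freedom in Cartan--K\"ahler (the characters here are $(0,2,4,3)$, so solutions depend on three functions of three variables) to prescribe $t=1$ along a suitable real analytic initial locus. Your sentence ``choosing the initial integral element \dots\ so that along the real analytic solution \dots'' asserts the conclusion without supplying either mechanism. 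In short: your identification of the gap is sharper than the paper's proof, but the repair you sketch needs one more genuine step before it is a proof.
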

\begin{proof}
Pick a line bundle $P_2 \subset D$, define $P_1=I(P_2)$ and apply Theorem \ref{pgemb}. 
\end{proof}
\begin{remark}
Corollary~\ref{cremb} also holds without the nondegeneracy assumption and in higher dimensions~\cite{MR0460724}. 
In~\cite{0305.35017}, Nirenberg has constructed a smooth nondegenerate $3$-dimen\-sional CR-structure which is not induced by an embedding into $\mathbb{C}^2$. It follows that the real analyticity assumption in Theorem \ref{pgemb} is necessary. 
\end{remark}

\providecommand{\bysame}{\leavevmode\hbox to3em{\hrulefill}\thinspace}
\providecommand{\MR}{\relax\ifhmode\unskip\space\fi MR }
\providecommand{\MRhref}[2]{%
  \href{http://www.ams.org/mathscinet-getitem?mr=#1}{#2}
}
\providecommand{\href}[2]{#2}


\begin{thebibliography}{1}

\bibitem{MR0460724}
Aldo Andreotti and C.~Denson Hill, \emph{Complex characteristic coordinates and
  tangential {C}auchy-{R}iemann equations}, Ann. Scuola Norm. Sup. Pisa (3)
  \textbf{26} (1972), 299--324. \MR{0460724}

\bibitem{MR1083148}
R.~L. Bryant, S.~S. Chern, R.~B. Gardner, H.~L. Goldschmidt, and P.~A.
  Griffiths, \emph{Exterior differential systems}, Mathematical Sciences
  Research Institute Publications, vol.~18, Springer-Verlag, New York, 1991.
  \MR{1083148}

\bibitem{MR1358612}
Robert Bryant, Phillip Griffiths, and Lucas Hsu, \emph{Toward a geometry of
  differential equations}, Geometry, topology, \& physics, Conf. Proc. Lecture
  Notes Geom. Topology, IV, Int. Press, Cambridge, MA, 1995, pp.~1--76.
  \MR{1358612}

\bibitem{bryantunimodular}
Robert~L. Bryant, \emph{Real hypersurfaces in unimodular complex surfaces},
  \href{http://arxiv.org/abs/math/0407472}{ar{X}iv:math/0407472v1}, 2004.

\bibitem{MR1504846}
Elie Cartan, \emph{Sur les vari\'et\'es \`a connexion projective}, Bull. Soc.
  Math. France \textbf{52} (1924), 205--241. \MR{1504846}

\bibitem{MR2313334}
S.~K. Donaldson, \emph{Two-forms on four-manifolds and elliptic equations},
  Inspired by {S}. {S}. {C}hern, Nankai Tracts Math., vol.~11, World Sci.
  Publ., Hackensack, NJ, 2006, pp.~153--172. \MR{2313334}

\bibitem{MR2003610}
Thomas~A. Ivey and J.~M. Landsberg, \emph{Cartan for beginners: differential
  geometry via moving frames and exterior differential systems}, Graduate
  Studies in Mathematics, vol.~61, American Mathematical Society, Providence,
  RI, 2003. \MR{2003610}

\bibitem{MR703049}
V.~V. Lychagin and V.~N. Rubtsov, \emph{The theorems of {S}ophus {L}ie for the
  {M}onge-{A}mp\`ere equations}, Dokl. Akad. Nauk BSSR \textbf{27} (1983),
  no.~5, 396--398. \MR{703049}

\bibitem{0305.35017}
Louis Nirenberg, \emph{{On a question of Hans Lewy.}}, Russ. Math. Surv.
  \textbf{29} (1974), no.~2, 251--262 (English. Russian original).

\end{thebibliography}
\end{document}